\newtheorem{teo}{Theorem}[section]
\newtheorem{rem}{Remark}[section]
\newtheorem{cor}{Corollary}[section]
\newtheorem{es}{Example}[section]
\newtheorem{defin}{Definition}[section]
\newcommand{\Z}{{\mathbb{Z}}}
\newcommand{\N}{{\mathbb{N}}}
\newcommand{\Pc}{{\mathcal{P}}}
\newcommand{\Sc}{{\mathcal{S}}}
\newcommand{\Oc}{{\mathcal{O}}}
\newcommand{\Tc}{{\mathcal{T}}}
\newcommand{\Ic}{{\mathcal{I}}}
\title{Nested sets, set partitions and  Kirkman-Cayley dissection numbers}
\author{Giovanni Gaiffi}
\date{\today}
\begin{document}
\maketitle
\begin{abstract}
In this paper   we show a a proof   by explicit bijections of  the famous Kirkman-Cayley formula for the number of dissections of  a convex polygon.
Our  starting point is the bijective correspondence  between  the set of nested sets made by \(k\) subsets of  \(\{1,2,...,n\}\)  with   cardinality   \(\geq 2\)  and the set of partitions of \(\{1,2,...,n+k-1\}\) into \(k\) parts with   cardinality  \(\geq 2\). 
A   bijection  between these two sets can be obtained from  P\'eter L Erd\H{o}s and L.A Sz\'ekely result in \cite{PeErdos};  to make this paper self contained we describe another  explicit bijection  that is  a  variant of their  bijection.

\end{abstract}

\section{Introduction}
Let  \(D_{n+1,k-1}\)  be the number of    dissections of a convex polygon with \(n+1\)  labelled edges by  \(k-1\) diagonals,  such that no two of the diagonals  intersect in their interior.  The  formula for \(D_{n+1,k-1}\)  dates back to     Kirkman  (see \cite{Kirkman}) and Cayley (who gave the first complete proof in \cite{Cayley}):
\[ D_{n+1,k-1}= \frac{1}{k}\binom{n-2}{k-1}\binom{n+k-1}{k-1}\]

In \cite{sikora} and \cite{stanleydissections} one can find two elegant proofs of this formula via bijections.
In this paper  we show another proof  that consists in the description of  two explicit bijections and is based on the combinatorics of nested sets and set partitions. We also provide  a  proof of the classical formula (see Chapter 13 of \cite{gouldenjackson}) for the number of the dissections such that the types  of the internal polygons are prescribed.  

We start a more detailed   outline   of the content of the present paper  by recalling   the notion of nested set.
This notion  appeared in geometry  in connection with models of configuration spaces,    first in Fulton and MacPherson paper \cite{fultonmacpherson}, then, with more generality, in De Concini and Procesi's papers    \cite{DCP2}, \cite{DCP1} (and later in \cite{DCPkirwan}) on wonderful models of subspace arrangements. 

Some  generalizations of De Concini and Procesi's definition  successively appeared in various combinatorial contexts. We refer the reader  to \cite{feichtnerkozlovincidence} where the case of  meet semilattices is dealt with, or to \cite{postnikov} and 
\cite{postnikoreinewilli} where some  polytopes named nestohedra are constructed, and finally to   \cite{petric2} where one can find a comparison among various definitions in the literature.

Let us denote by \(\Pc_2(\{1,2,...,n\})\) the subset of the power set \(\Pc(\{1,2,...,n\})\) whose elements have cardinality greater than or equal to 2. 
The following  definition  of nested set  of   \(\Pc_2(\{1,2,...,n\})\)  is a special case    of  the  more general  combinatorial definition. 
\begin{defin}
\label{def:nestedpowerset}
Let \(n\geq 2\). A  subset  \(S\) of \(\Pc_2(\{1,2,...,n\})\)  is a nested set if and only if it contains \(\{1,2,...,n\}\) and for any \(I,J\in S\) we have that either \(I\subset J\) or \(J\subset I\) or \(I\cap J=\emptyset\).
We will denote by \(\Sc_2(n,k)\) the set of the nested sets \(S\)  of  \(\Pc_2(\{1,2,...,n\})\) such that \(|S|=k\). 

\end{defin}

Now we observe that there is a bijective correspondence    between \(\Sc_2(n,k)\) and the set \(\Tc_2(n+k-1,k)\) of partitions of the set \(\{1,2,...,n+k-1\}\) into \(k\) parts of cardinality greater than or equal to 2.
A   bijection  between these two sets  can be obtained as a particular case of the bijection between   rooted trees on \(n\) leaves and partitions proven by  P\'eter L Erd\H{o}s and L.A Sz\'ekely (see Theorem 1 of     \cite{PeErdos}).

To make our paper self contained we show in Section \ref{sec:maintheorem} (Theorem \ref{bijectionnested}) an explicit bijection  between \(\Sc_2(n,k)\) and \(\Tc_2(n+k-1,k)\) that is a variant of the bijection obtained from 
 \cite{PeErdos}.

We notice that, for fixed \(n\),  the cardinalities \(|\Sc_2(n,k)|=|\Tc_2(n+k-1,k)|\)  are the Ward numbers (see \cite{ward}, and the sequence A134991 of OEIS) and can be read along the  diagonals in the table of the 2-associated Stirling numbers of the second kind  at  page 222 of \cite{comtet}. They can as well be interpreted  as the face numbers in the tropical Grassmannian \(G(2,n+1)\), i.e. the space of phylogenetic trees \(T_{n+1}\) (see \cite{billeravogtmann},\cite{feichtnercomplexestrees},\cite{speyersturmfels}). For a description of generating formulas for these numbers one can  also see Chapter 5 of \cite{stanleyenumerative2} (in particular Section 5.2.5 and Exercise 5.40).

As  it is well known (see Section \ref{sec:final}, Figure \ref{dissection}), the dissections of a convex polygon with \(n+1\) edges  by \(k-1\) diagonals are in bijection with the parenthesizations with \(k\) couples of parentheses of a list of \(n\) distinct numbers \(a_1,a_2,...,a_n\) (the maximal couple of parentheses  is included and  every couple of parentheses contains at least two numbers). 
We will denote   by \(\Sc_2((a_1,a_2,...,a_n),k)\) the set of all these parenthesizations: as a consequence of the remark above,  \(|\Sc_2((a_1,a_2,...,a_n),k)|=D_{n+1,k-1}\).

Now, an `ordered' variant of Theorem \ref{bijectionnested} (stated in  Section \ref{sec:maintheorem}  as   Theorem \ref{orderedbijectionnested})   describes a bijection between \({\displaystyle \bigcup_{\sigma\in S_n}\Sc_2((a_{\sigma(1)},a_{\sigma(2)},...,a_{\sigma(n)}),k)}\)
 and the set of {\em admissible  internally ordered} partitions of \(\{1,2,...,n+k-1\}\) into \(k\) parts, i.e. partitions whose parts  have cardinality \(\geq 2\) and are equipped with an internal total ordering. 

This leads to a proof   (in Section \ref{sec:final}, Corollary \ref{cor:cayleyformula})  of Kirkman-Cayley  formula, since the  enumeration of the admissible  internally ordered partitions is  provided, again by an explicit bijection,  by Theorem \ref{teo:semiordinate} in Section \ref{sec:enumeration}.

Even if our proof is purely combinatorial,  in the end of Section  \ref{sec:final} we  sketch out  a geometric interpretation: our  argument corresponds to counting in two different ways the boundary components of a  spherical model of \({\overline M}_{0,n+1}\),  the moduli space of real stable \(n+1\)-pointed   curves of genus 0.

The classical formula for the number of dissections of a convex polygon with \(n+1\) edges such that the types of the internal polygons are prescribed    also follows, in this combinatorial picture,  as an another quick application of Theorem \ref{orderedbijectionnested} (see Section \ref{sec:final}, Corollary  \ref{prescribedpolygons}).

\section{Nested sets and set partitions}
\label{sec:maintheorem}
For every \(i\in \Z\) and \(k\in \N\) let us denote by \([i,i+k]\) the interval  of integers \(\{i,i+1,...,i+k\}\).

As we pointed out in the Introduction,  from  Theorem 1 in \cite{PeErdos} one obtains a  bijection between \(\Sc_2(n,k)\) and the set \(\Tc_2(n+k-1,k)\) of partitions of \([1, n+k-1]\) into \(k\) parts of cardinality greater than or equal to 2. 
To make our paper self contained, the  first part of this  section is devoted to showing  an explicit    bijection between \(\Sc_2(n,k)\) and  \(\Tc_2(n+k-1,k)\)  that is a variant of the one that can be deduced from \cite{PeErdos}.
In the second part of the present  section an `ordered' version of this bijection, that involves partitions into ordered sets,  is  described (Theorem  \ref{orderedbijectionnested}).

\begin{defin}
\label{ordering}
We fix the following  (strict) partial ordering in \(\Pc_2(\{1,2,...,n\})\): given two sets \(I\) and \(J\) in \(\Pc_2(\{1,2,...,n\})\) we put \(I< J\)  if the minimal element in \(I\) is less than the minimal element in \(J\).
\end{defin}

\begin{teo}
\label{bijectionnested}Let us consider  two integers  \(n,k\) with \(n \geq 2\), \(n-1\geq k\geq 1\). 
There is a bijection between \(\Sc_2(n,k)\) and the set \(\Tc_2(n+k-1,k)\) of partitions of \([1, n+k-1]\) into \(k\) parts of cardinality greater than or equal to 2.
\end{teo}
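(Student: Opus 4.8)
The plan is to construct the bijection recursively by peeling off minimal blocks on the partition side, which correspond to minimal (non-maximal) sets on the nested-set side. Let me fix notation: a nested set $S \in \Sc_2(n,k)$ always contains the maximal element $\{1,\dots,n\}$, and the remaining $k-1$ sets form a forest under inclusion. I would order the elements of $S$ using the partial order of Definition \ref{ordering}, refined arbitrarily to a total order, and process them from smallest to largest.

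First I would describe the map $\Phi\colon \Sc_2(n,k)\to\Tc_2(n+k-1,k)$. Given $S$, pick a set $I\in S\setminus\{\{1,\dots,n\}\}$ that is minimal under inclusion (a leaf of the forest); say $\min(I)=a$. I remove $I$ from $S$, replace everywhere the block $I$ by the single new symbol $a$ — more precisely, collapse the elements of $I$ in every set $J\supsetneq I$ to the single element $a$, and relabel the ground set $\{1,\dots,n\}\setminus I \cup\{a\}$ order-isomorphically to $\{1,\dots,n-|I|+1\}$. This yields a nested set $S'\in\Sc_2(n-|I|+1,k-1)$. By induction $S'$ corresponds to a partition $\pi'$ of $[1,\,(n-|I|+1)+(k-1)-1] = [1,\,n+k-1-|I|]$ into $k-1$ parts. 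To build $\pi$ from $\pi'$: the block $I$ has cardinality $|I|\ge 2$; I shift the labels of $\pi'$ that are $\ge$ (the image of) $a$ up by $|I|-1$ to make room, reintroduce $I$ as a new block of $\pi$ (after relabelling its elements into the freed-up slot $\{a, a+1,\dots,a+|I|-1\}$ consistently with their order in the original ground set), and leave the recursion's block containing the symbol $a$ now containing the smallest of these freed labels. The key bookkeeping point is that adding a set of size $|I|$ adds exactly $|I|-1$ to the size of the ground set of the partition while adding one block, which is exactly what keeps the arithmetic $n+k-1$ invariant.

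To prove this is a bijection I would construct the inverse $\Psi$ explicitly: given $\pi\in\Tc_2(n+k-1,k)$, locate the block $B$ of $\pi$ containing the smallest label not forced to stay, reverse the relabelling to recover a set $I$, delete $B$'s labels (shifting down), apply $\Psi$ recursively to get a nested set on the smaller ground set, then re-expand the collapsed symbol back into $I$. The main things to check are: (i) well-definedness — the recursion's choice of "which leaf/block to peel" must be canonical, so I would peel the block $I$ with the \emph{smallest} minimal element among non-maximal sets (equivalently, on the partition side, the unique block determined by tracking where that smallest element lands), and verify this choice is preserved under the relabelling; (ii) that $\Phi$ and $\Psi$ land in the claimed sets, i.e. the nestedness condition and the cardinality-$\ge 2$ condition are preserved in both directions — collapsing and expanding an inclusion-minimal set never breaks nestedness, and a block of size $\ge 2$ produces a set of size $\ge 2$ and conversely; (iii) $\Psi\circ\Phi=\mathrm{id}$ and $\Phi\circ\Psi=\mathrm{id}$, which follows by induction once the peeling choices are shown to match. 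The base case is $k=1$: $\Sc_2(n,1)=\{\{\{1,\dots,n\}\}\}$ and $\Tc_2(n,1)=\{\{[1,n]\}\}$, a single element on each side.

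The main obstacle I anticipate is purely organizational: keeping the relabelling maps straight so that the total order used to pick the next set to peel is genuinely well-defined and commutes with the contraction/expansion operations. Concretely, after collapsing $I$ to the symbol $a=\min I$ and renormalizing the ground set, one must ensure that the "next" minimal set of $S'$ is the image of the "next" minimal set of $S$, and symmetrically on the partition side; this is where a careful, explicit description of the relabelling (rather than an abstract order-isomorphism) earns its keep, and it is exactly the place where our variant is likely to differ in presentation from the bijection extracted from \cite{PeErdos}. Once that is nailed down, injectivity/surjectivity are a routine induction on $k$.
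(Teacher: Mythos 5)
Your recursive peeling strategy could in principle be made to work, but as written the insertion step fails and the crux of the inverse is left unproved, so there is a genuine gap. Passing from $S$ to $S'$ replaces $n$ by $n-|I|+1$ and $k$ by $k-1$, so the ground set of the partition must grow from $[1,\,n+k-1-|I|]$ to $[1,\,n+k-1]$, i.e.\ by $|I|$ labels, not by $|I|-1$ as you assert; shifting the labels of $\pi'$ that are $\geq a$ up by $|I|-1$ frees only the $|I|-1$ slots $a,\dots,a+|I|-2$, so a block of size $|I|$ does not fit there, the resulting object is a partition of $[1,\,n+k-2]$ rather than of $[1,\,n+k-1]$, and your patch (re-using $a$ both in the new block and in ``the recursion's block containing the symbol $a$'') would place $a$ in two blocks at once. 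The missing idea is that the collapsed symbol $a$ must change role: it is a leaf of $S'$, but the corresponding vertex of $S$ is internal, so its parent's block must receive a \emph{new} internal label while all $|I|$ elements of $I$ enter as new leaf labels --- which is exactly the $+|I|$ count. The paper avoids this difficulty by a global, non-recursive construction: it draws the tree of $S$, labels the internal vertices $n+1,\dots,n+k$ level by level (within each level using the order of Definition \ref{ordering}), and lets each internal vertex contribute the block formed by the labels of its children, so no cascade of relabellings ever occurs.

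Even with the shift corrected to $|I|$, the proof is incomplete because the inverse map is not actually defined: ``the block containing the smallest label not forced to stay'' is not a criterion, and your items (i) and (iii) rest on the unverified assertion that the canonical peeling choice (the inclusion-minimal set of $S$ with smallest minimum) can be recognized intrinsically inside $\pi$ and commutes with the order-isomorphic relabellings performed at every stage. That commutation is the substantive content of the theorem, not organizational bookkeeping. In the paper's proof it is replaced by a clean structural observation that makes the inverse canonical with no choices: the blocks of $P$ contained in $[1,n]$ are exactly the level-$1$ vertices (and are ordered by Definition \ref{ordering} to recover their labels $n+1,\dots,n+i$), then among the remaining blocks those contained in the complement of $[n+i+1,n+k-1]$ are exactly the level-$2$ vertices, and so on, reconstructing the nested set level by level. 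Until you repair the insertion arithmetic and exhibit, with proof, an intrinsic rule on the partition side that identifies the inserted block and the value of $a$ at every stage of the recursion, your construction does not yet define a bijection between $\Sc_2(n,k)$ and $\Tc_2(n+k-1,k)$.
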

\begin{proof}
Let us  consider a nested set \(S\in \Sc_2(n,k)\). It can be represented by an oriented  rooted tree on \(n\) leaves labelled by the numbers \(1,2,...,n\) in the following way. We consider the set \({\tilde S}=S\cup\{1\}\cup \{2\}\cup\cdots \cup \{n\}\).
Then the tree coincides  with the Hasse diagram of  \({\tilde S}\) viewed as a poset by the  inclusion relation:  the root is \(\{1,2,...,n\}\)  and the orientation goes from the root to the leaves, that are the vertices \(\{1\},\{2\},\ldots, \{n\} \) labelled respectively by the numbers \(1,2,...,n\). 

We observe that we can partition the set of   vertices of the tree into {\em levels}:  level 0 is made by the leaves, and in general,   level \(j\)  is made by the vertices \(v\) such that the maximal length of an oriented  path that connects \(v\) to a leaf is \(j\).
We notice that, since the nested set has \(k\) elements, there are \(k\) internal vertices of this  tree, including the root.
Now we can label the internal vertices of the tree in the following way. Let us suppose that there are \(q\) vertices in level 1. These  vertices correspond, by the nested property, to pairwise disjoint  elements of  \(\Pc_2(\{1,2,...,n\})\)  and therefore we can totally order them using the ordering of Definition \ref{ordering}. Then  we  label them with  the numbers from \(n+1\) to \(n+q\) (the label  \(n+1\)  goes to the minimum, while \(n+q\) goes to the maximum).

At the same way, if there are   \(t\) vertices in level 2, we can label them with the numbers from \(n+q+1\) to \(n+q+t\) , and so on. At the end of the process, the root is labelled with the number \(n+k\): we have obtained an oriented labelled tree with \(n+k\) vertices, where at least two edges  stem from each internal vertex and  the leaves are labelled by  the numbers from 1 to \(n\).

We can now  associate to such a tree a partition in \(\Tc_2(n+k-1,k)\) by assigning  to every internal vertex \(v\) (including the root) the set of the labels of the vertices covered by \(v\). An example of this process is provided by Figure \ref{labellednestedtree}.

We have therefore described a map \(\phi\: : \: \Sc_2(n,k) \rightarrow \Tc_2(n+k-1,k)\). Let us  now describe its inverse.

 \begin{figure}[h]

 \center
\includegraphics[scale=0.6]{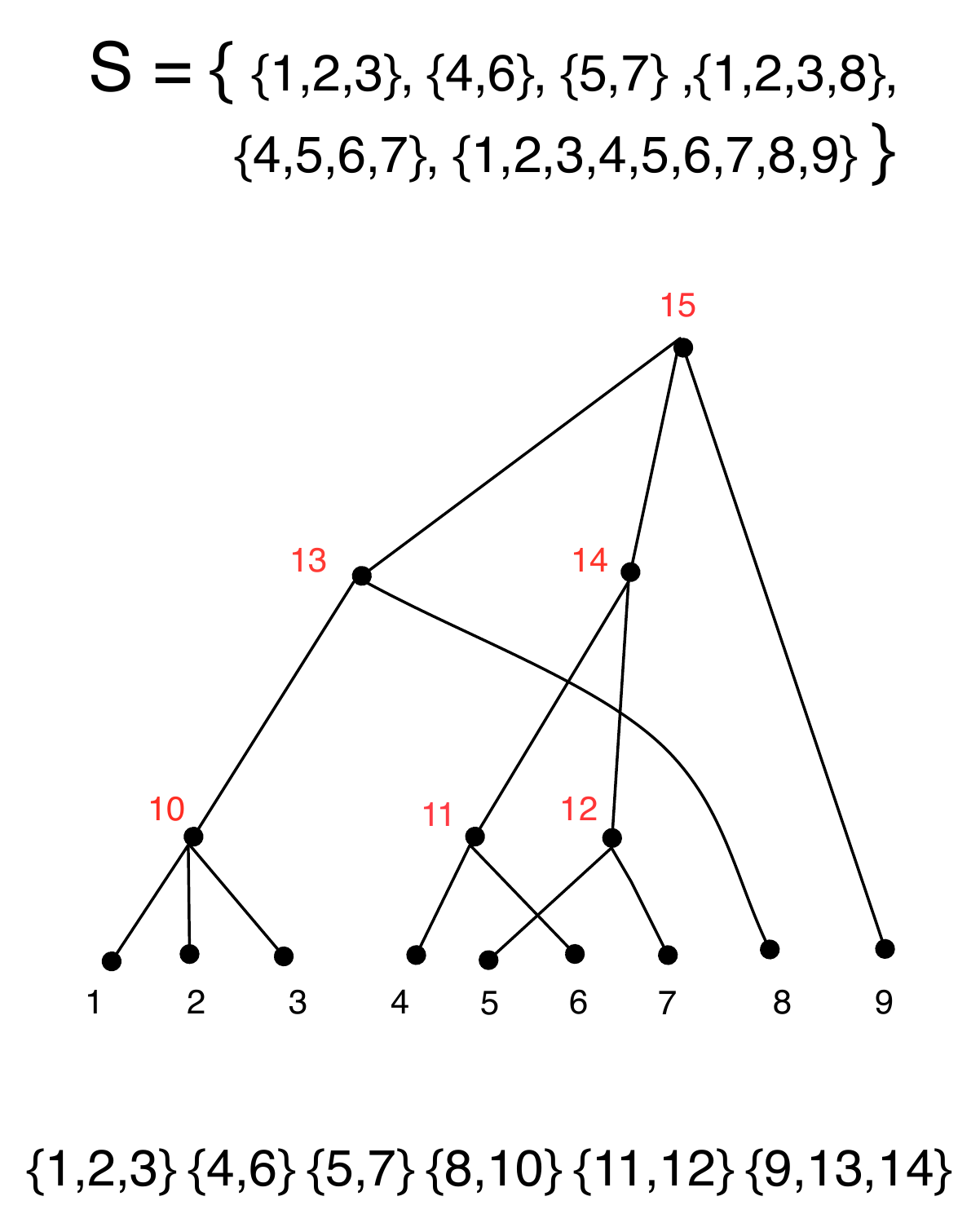}
\caption{A nested set \(S\) with 6 elements  in \(\Pc_2(\{1,2,...,9\})\) (top of the picture),   its associated oriented rooted tree and its associated partition in \(\Tc_2(14,6)\) (bottom of the picture).}
 \label{labellednestedtree}
\end{figure}

Let us consider a partition \(P\) in \(\Tc_2(n+k-1,k)\) with \(k>1\) (the case \(k=1\) is trivial). 
We observe that, since there are \(k\) parts in the partition,  at least one of these parts  is a subset of  \([1,n]\).
Let \({\tilde A_1},{\tilde A_2},...,{\tilde A_i}\) (with \(i\leq k-1\)) be the parts  that are subsets of  \([1,n]\); here  we  have indexed these sets according to the  ordering of Definition \ref{ordering}.

Now, if \(i=k-1\)  the set  \(S=\{\{1,2,...,n\}, {\tilde A_1},{\tilde A_2},...,{\tilde A_i} \}\) is the only one  nested set  in \(\Sc_2(n,k)\) such that  \(\phi(S)=P\). We observe that in the tree associated with this nested set the vertices in  level 1 correspond to the sets \({\tilde A_1},{\tilde A_2},...,{\tilde A_i}\). 

If  \(i<k-1\) we notice  that at least one of the remaining \(k-i\) parts  in  the initial partition  \(P\) is a subset of the complement of \([n+i+1,n+k-1]\) in \([1, n+k-1]\), since the set \([n+i+1,n+k-1]\) has cardinality \(k-i-1\). 

Let us then denote by \(A_{i+1},...,A_{i+s}\) (with \(1\leq s\leq k-1-i\)) these remaining parts  included in  the complement of \([n+i+1,n+k-1]\). For  every \(t=1,...,s\) we associate to  \(A_{i+t}\) 
the set
\[{\tilde A_{i+t}}=\left( A_{i+t}\cap [1,n]\right )\; \cup \; \bigcup_{h \in A_{i+t}\cap [n+1,  n+i]}{\tilde A_{h-n}}\]
The indices have been chosen  in such a way  that, according to  the ordering of Definition \ref{ordering}, we have \({\tilde A_{i+1}}<{\tilde A_{i+2}}<\cdots <{\tilde A_{i+s}}\).

Now, if \(i+s=k-1\) the process stops, and the set \(S=\{\{1,2,...,n\}, {\tilde A_1},{\tilde A_2},...,{\tilde A_i}, {\tilde A_{i+1}},{\tilde A_{i+2}},\cdots ,{\tilde A_{i+s}} \}\) is the only one  nested set  in \(\Sc_2(n,k)\) such that   \(\phi(S)=P\).
We observe that in the tree associated with this nested set the vertices in  level 1 correspond to the sets \({\tilde A_1},{\tilde A_2},...,{\tilde A_i}\) and the vertices in  level 2 correspond to the sets \({\tilde A_{i+1}},{\tilde A_{i+2}},\cdots ,{\tilde A_{i+s}}\).

If \(i+s< k-1\) we can continue and we can construct, level after level,   the only one  nested set  in \(\Sc_2(n,k)\) such that  \(\phi(S)=P\).


\end{proof}

\begin{rem}

We notice that this  bijection introduces on \(\Sc_2(n,k)\) an action of the symmetric group \(S_{n+k-1}\). As one can easily check, when \(k>3\) this action  is not compatible with the usual   \(S_n\) action.  A geometric application of this remark will be discussed in the paper \cite{callegarogaiffi3}.
\end{rem}

We now state a variant of the theorem above, where  nested sets of ordered lists and internally ordered partitions come into play.

\begin{defin}
Let us consider   a  list of distinct numbers \(a_1,a_2,...,a_n\) (with \(n\geq 2\)), and a parenthesization  of  \(a_1,a_2,...,a_n\)  that includes  the maximal couple of parentheses \((a_1,a_2,...,a_n)\) and such that every couple of parentheses contains at least two numbers. We will  call this parenthesization   {\em a nested set of the list 
} \(a_1,a_2,...,a_n\),  
 and we will denote by  \(\Sc_2((a_1,a_2,...,a_n),k)\) the set whose elements are the  nested sets \(S\) of the list \(a_1,a_2,...,a_n\)   with  \(|S|=k\). Furthermore, we will denote by  \(\Oc\Sc_2((a_1,a_2,...,a_n),k)\) the set:
\[ \Oc\Sc_2((a_1,a_2,...,a_n),k)=\bigcup_{\sigma\in S_n}\Sc_2((a_{\sigma(1)},a_{\sigma(2)},...,a_{\sigma(n)}),k)\]
\end{defin}
We observe that if \(\{a_1,a_2,...,a_n\}=\{1,2,..,n\}\) we  can associate to  a nested set of the list 
 \(a_1,a_2,...,a_n\) a nested set in \(\Sc_2(n,k)\) in the following obvious way: we associate to every couple of parentheses the set of the numbers contained in it.

\begin{defin}
\label{def:internallyordered}
An {\em  internally ordered \(k\)-partition} (with \(k\geq 1\)) of  a finite set \(X\) is given by 
\begin{itemize}
\item an unordered partition \(\Pc=\{P_1, P_2,...,P_k\} \) of \(X\) into \(k\) parts;
\item a complete   ordering of the elements of \(P_i\) for every \(1\leq i\leq k\).

\end{itemize} 

If the cardinality of all the subsets \(P_1,P_2,...,P_k\)  is greater than or equal to 2 we say that the internally  ordered partition is {\em admissible}. 
\end{defin}

\begin{teo}
\label{orderedbijectionnested}
Let us consider  two integers  \(n,k\) with \(n \geq 2\), \(n-1\geq k\geq 1\). 
There is a bijection between \(\Oc\Sc_2((1,2...,n),k)\)  and the set \(\Ic\Tc_2(n+k-1,k)\) of  the admissible internally  ordered   \(k\)-partitions of \([1, n+k-1]\).
\end{teo}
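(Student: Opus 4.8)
The plan is to lift the bijection $\phi\colon\Sc_2(n,k)\to\Tc_2(n+k-1,k)$ of Theorem~\ref{bijectionnested} to the ordered setting by an argument on fibres. The first step is to isolate a structural fact implicit in the proof of Theorem~\ref{bijectionnested}: if $S\in\Sc_2(n,k)$ has associated labelled rooted tree $T$ (leaves labelled $1,\dots,n$, the $k$ internal vertices labelled $n+1,\dots,n+k$ level by level, with the root receiving $n+k$), then the part of the partition $\phi(S)$ attached to an internal vertex $v$ is \emph{by construction} the set of labels of the children of $v$. Since the labelling is injective on vertices, giving a total ordering of that part is literally the same as giving a total ordering of the set of children of $v$; and such a part has cardinality $\ge 2$ exactly because each internal vertex of $T$ has at least two children.

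The second, and main, step is to describe two forgetful maps together with their fibres. On the right there is $q\colon\Ic\Tc_2(n+k-1,k)\to\Tc_2(n+k-1,k)$ forgetting the internal orderings; its fibre over $\{P_1,\dots,P_k\}$ is canonically the product $\prod_{i=1}^{k}\{\text{total orderings of }P_i\}$. On the left there is $p\colon\Oc\Sc_2((1,2,\ldots,n),k)\to\Sc_2(n,k)$ sending a parenthesization of an ordered list $(\sigma(1),\dots,\sigma(n))$ to the collection of subsets of $\{1,\dots,n\}$ cut out by its pairs of parentheses; this lands in $\Sc_2(n,k)$ because the blocks of a parenthesization are pairwise nested or disjoint, include the maximal block, and have cardinality $\ge 2$. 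The point is that a parenthesization of an ordered list of $\{1,\dots,n\}$ whose block set is a prescribed $S$ is the same datum as a choice, for every internal vertex $v$ of $T$, of a total order on the children of $v$: from such local orders one reads off the list $(\sigma(1),\dots,\sigma(n))$ by listing the leaves of $T$ from left to right once the children of every internal vertex have been linearly ordered, and conversely the list determines each local order as the order in which the sub-blocks and single numbers immediately inside a given block (that is, the children of the corresponding vertex) occur. Hence the fibre $p^{-1}(S)$ is canonically $\prod_{v\ \mathrm{internal}}\{\text{total orderings of the children of }v\}$, which by the structural fact of the first step is exactly the fibre $q^{-1}(\phi(S))$.

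Finally I assemble the bijection. Define $\Phi\colon\Oc\Sc_2((1,2,\ldots,n),k)\to\Ic\Tc_2(n+k-1,k)$ by taking a parenthesization $\pi$, forming $S=p(\pi)$ with its tree $T$, and attaching to the part of $\phi(S)$ at each internal vertex $v$ the order in which the children of $v$ appear from left to right in $\pi$; the output is admissible because all parts have cardinality $\ge 2$. The inverse $\Psi$ is the evident reverse construction: given an admissible internally ordered partition, set $S=\phi^{-1}(\{P_1,\dots,P_k\})$, transport the order of each $P_i$ to the children of the corresponding internal vertex of $T$, list the leaves of $T$ left to right to obtain the ordered list, and read off the parenthesization whose block set is $S$. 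That $\Phi$ and $\Psi$ are mutually inverse is then formal, combining bijectivity of $\phi$ on the base (Theorem~\ref{bijectionnested}) with the two identical fibre descriptions. The main obstacle is entirely in the second step: one must check carefully, and without sloppiness about what ``a parenthesization of an ordered list'' remembers, that a parenthesization with prescribed block set $S$ is equivalent to a compatible family of local orders on the children of the vertices of $T$, and that this equivalence is precisely the one making $\Phi$ land in the fibre over $\phi(S)$. Once that is pinned down — together with the minor remarks that the union defining $\Oc\Sc_2$ is disjoint since the leaf reading recovers $\sigma$, and that globally $\sum_{S}\prod_{v}c(v)!=\sum_{P}\prod_{i}|P_i|!$ — the theorem follows.
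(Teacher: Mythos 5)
Your proposal is correct and is essentially the paper's own argument: both constructions associate to a parenthesization the planar rooted tree of its block set and order each part of $\phi(S)$ by the left-to-right order of the children of the corresponding internal vertex, with the inverse reconstructing the planar tree from the internal orderings and reading off the leaves. Your fibrewise framing (forgetful maps $p$, $q$ and matching fibres over the bijection of Theorem \ref{bijectionnested}) is just a more formal packaging of the same bijection the paper describes directly.
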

\begin{proof}

Let us put \(a_1=1,a_2=2,...,a_n=n\).  Starting  from a nested set \(S\) in  \(\Sc_2((a_{\sigma(1)},a_{\sigma(2)},...,a_{\sigma(n)}),k)\)  we can construct, in a similar way as  in the proof of Theorem \ref{bijectionnested},  an oriented rooted   labelled  tree.

The only difference from the construction described in  the proof of Theorem \ref{bijectionnested}  is the following one:   we draw the leaves  labelled by \(a_{\sigma(1)},a_{\sigma(2)},...,a_{\sigma(n)}\) from left to right, and, since we are dealing with a nested set of the  list \(a_{\sigma(1)},a_{\sigma(2)},...,a_{\sigma(n)}\), in the picture of  the tree   no two of the edges intersect in their interiors.  

Finally we  construct an internally ordered  admissible \(k\)-partition of \([1, n+k-1]\) according to the following rule. 
The parts \(P_1,...,P_k\) of the partition are produced by the internal vertices of the tree. The part  \(P_i\) is obtained from the vertex labelled by \(n+i\) and it is  an ordered set constructed in this way: if, from left to right, the vertices covered by the vertex labelled by \(n+i\) are labelled by \(b_1,b_2,...,b_r\), then \(P_i\) is the set \(\{b_1,b_2,...,b_r\}\) ordered by  \(b_1\prec b_2\prec \cdots \prec b_r\). 

We have described a map \(\Gamma\: : \: \Oc\Sc_2((1,2,...,n),k) \rightarrow \Ic\Tc_2(n+k-1,k)\).
The inverse map is constructed by associating a tree to  an internally ordered  admissible \(k\)-partition \(\{P_1,...,P_k\}\)  of \([1 , n+k-1]\). The idea is essentially the same as in the  proof of Theorem \ref{bijectionnested},
with the only difference that we keep into account the internal orderings of the sets \(P_1,...,P_k\) and we draw the edges from left to right according to this ordering. At the end we produce a nested set of a permutation of the list \(1,2,...,n\) as it is illustrated in Figure \ref{orderednestedtree}.

\begin{figure}[h]

 \center
\includegraphics[scale=0.6]{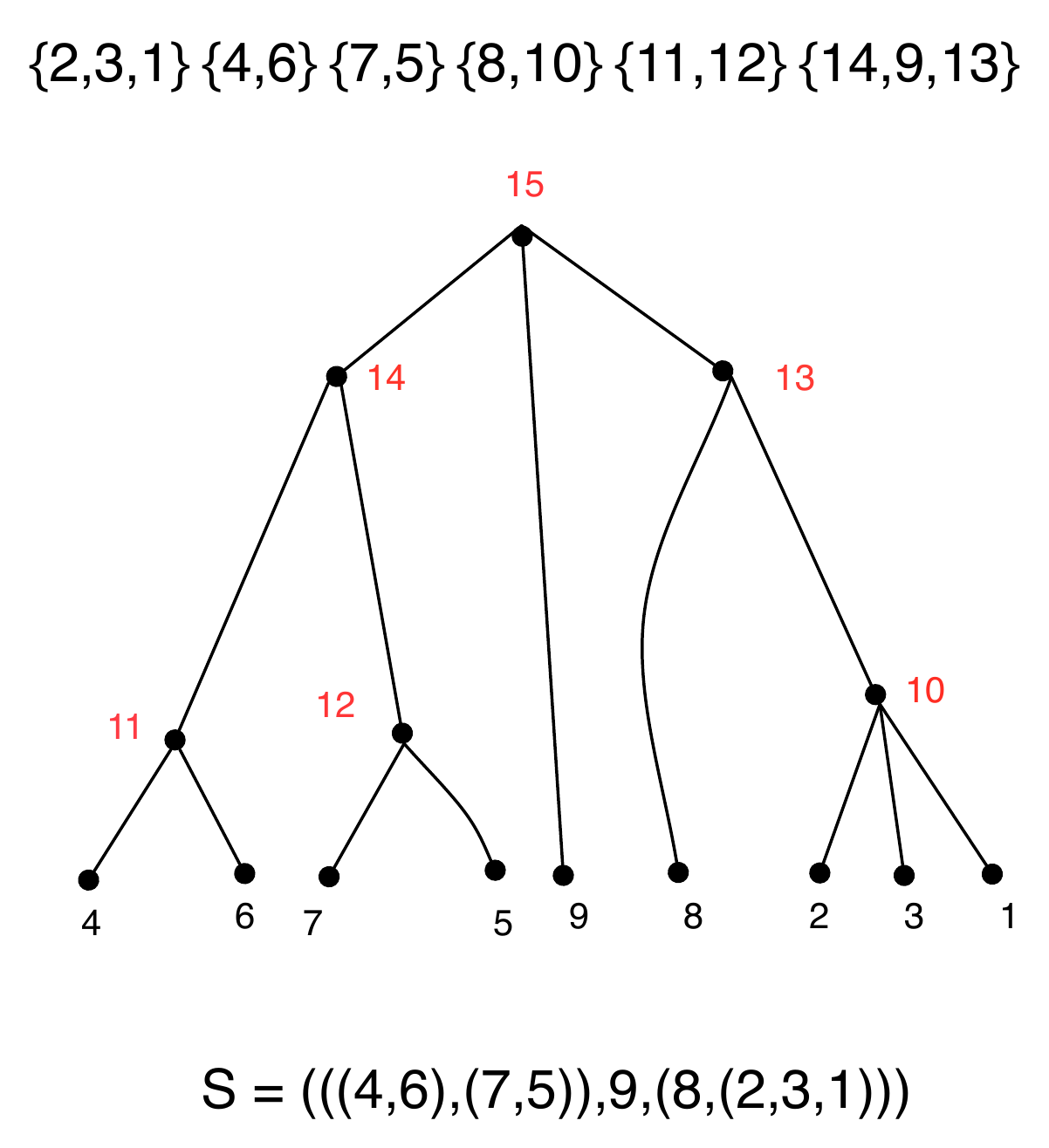}
\caption{The  internally ordered  partition of \([1,14]\) into 6 parts  that is on top of the picture (the internal orderings \(\prec\) are obtained reading from left to right), produces, via  its associated oriented rooted tree, the parenthesization of the list \(4,6,7,5,9,8,2,3,1\) written  on the bottom of the picture.}
 \label{orderednestedtree}
\end{figure}

 \end{proof}

\begin{rem}
A slight modification of the proof of  Theorem \ref{bijectionnested} shows that, as in \cite{PeErdos}, we can extend the bijection in the statement to a  bijection between the set of rooted trees  that have \(k\)  vertices (including the root) and  \(n\) labelled leaves and the set of all the partitions of \([1,n+k-1] \) into \(k\) parts.
 A similar remark applies to Theorem \ref{orderedbijectionnested}.
\end{rem}

\section{Enumeration of internally  ordered \(k\)-partitions}
\label{sec:enumeration}
In the preceding section, Theorem \ref{orderedbijectionnested} has pointed out   an  interesting combinatorial aspect of  the  admissible internally  ordered \(k\)-partitions of \([1, n+k-1]\).
In this section we are going to  count them  by showing an explicit bijection.
 It is useful to add to these partitions a further structure: we  mark    one of the parts, that becomes a {\em distinguished} part.

\begin{defin}

\label{def:semiordered}
A {\em distinguished  internally  ordered \(k\)-partition} (with \(k\geq 1\)) of  a finite set \(X\) is an admissible internally ordered \(k\)-partition where one of the  parts  of the partition is distinguished. 
We will denote by \(X_1\) this distinguished set, therefore the partition is made by  the parts  \(X_1,P_2,...,P_k\).
\end{defin}

\begin{rem}
\label{remordering}
In the definition above, the partition of \(X-X_1\) provided by the sets \(P_i\) is  unordered.
Let  \(p_{i1}\) be, for every \(i\), the smallest  element of \(P_i\) with respect to its  internal ordering.
From now on we will consider the case when \(X\subset \Z\) and we will  assign, by convention,  the indices to the sets \(P_i\) in such a way that  if \(i<j\)   than \(p_{i1}<p_{j1}\).
\end{rem}

\begin{teo}
\label{teo:semiordinate}
Given two integers  \(n,k\) with \(n \geq 2\), \(n-1\geq k\geq 1\),  there is a bijection between the set of distinguished  internally  ordered \(k\)-partitions of \([1,n+k-1]\) and the triples \((I, \sigma, D)\) where 
\begin{itemize}
\item \(I= i_1, i_2, ..., i_n\)  is a sublist of cardinality \(n\) extracted from the list \(L=1,2,..., n+k-1 \), 
\item \(\sigma \)  is a permutation in the symmetric group \(S_n\), 
\item   \(D\) is a sublist of cardinality \(k-1\) extracted from the list  \( i_{\sigma(2)}, ..., i_{\sigma(n-1)}\) (in particular,  for every  \(n\geq 2\), if \(k=1\)  then  \(D\) is the empty list).
\end{itemize}
Therefore the number of distinguished  internally  ordered \(k\)-partitions of \([1,n+k-1]\) is \[n!\binom{n-2}{k-1}\binom{n+k-1}{k-1}\]

\end{teo}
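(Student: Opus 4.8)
I would construct the bijection explicitly, by ``unfolding'' a distinguished internally ordered partition into a single list of length \(n\) together with a record of how that list is to be cut back into blocks. (Alternatively one could route through Theorem \ref{orderedbijectionnested}, viewing a distinguished internally ordered partition as a parenthesization with one marked pair of parentheses, but the direct route seems cleanest.)

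\emph{The map to triples.} Start from a distinguished internally ordered \(k\)-partition of \([1,n+k-1]\); write it as \(X_1,P_2,\dots,P_k\) with \(X_1\) the distinguished part and \(P_2,\dots,P_k\) indexed as in Remark \ref{remordering}, and present each part as the list of its elements in internal order. Form the list \(S\) by concatenating \(X_1\) in full, then, for \(i=2,\dots,k\) in turn, the list \(P_i\) with its first element \(p_{i1}\) deleted. Its length is \(|X_1|+\sum_{i=2}^{k}(|P_i|-1)=(n+k-1)-(k-1)=n\), and since the parts are disjoint \(S\) is a list of \(n\) distinct numbers from \([1,n+k-1]\). Let \(I=(i_1<\cdots<i_n)\) be the underlying set of \(S\) in increasing order and let \(\sigma\in S_n\) be the permutation with \(S=(i_{\sigma(1)},\dots,i_{\sigma(n)})\). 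Finally call \emph{cut positions} the \(k-1\) positions of \(S\) at which one block ends and the next begins (namely \(|X_1|\), then \(|X_1|+(|P_2|-1)\), and so on up to \(|X_1|+\sum_{i=2}^{k-1}(|P_i|-1)\)), and let \(D\) be the sublist of \((i_{\sigma(2)},\dots,i_{\sigma(n-1)})\) consisting of the entries of \(S\) at the cut positions.

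\emph{Well-definedness, the inverse, and counting.} The one point that needs an argument is that the cut positions all lie in \(\{2,\dots,n-1\}\), so that \(D\) really is a length-\((k-1)\) sublist of the ``middle'' list \(i_{\sigma(2)},\dots,i_{\sigma(n-1)}\): the smallest cut position is \(|X_1|\ge 2\), the largest is \(n-(|P_k|-1)\le n-1\), and the \(k-1\) cut positions are distinct; since \(S\) has distinct entries, subsets of positions in \(\{2,\dots,n-1\}\) correspond to sublists of the middle list. (For \(k=1\) there are no cuts and \(D\) is empty.) For the inverse, given \((I,\sigma,D)\) put \(S=(i_{\sigma(1)},\dots,i_{\sigma(n)})\), let \(j_1<\cdots<j_{k-1}\) enumerate \([1,n+k-1]\setminus I\), use \(D\) to mark \(k-1\) positions of \(S\) strictly between the first and the last, cut \(S\) at those positions into consecutive nonempty blocks \(S_1,\dots,S_k\), declare \(X_1:=S_1\) (internal order inherited from \(S_1\)) to be distinguished, and for \(i=2,\dots,k\) let \(P_i\) be \(S_i\) with \(j_{i-1}\) prepended; then \(|X_1|\ge 2\), each \(|P_i|\ge 2\), the parts partition \([1,n+k-1]\), and \(p_{i1}=j_{i-1}\) forces \(p_{21}<\cdots<p_{k1}\), so Remark \ref{remordering}'s convention holds. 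That the two maps are mutually inverse is then immediate: in \(S\) the stripped blocks \(P_2,\dots,P_k\) appear left to right in the order \(p_{21}<\cdots<p_{k1}\), matching the assignment \(p_{i1}=j_{i-1}\) in the inverse, and deleting then re-prepending \(p_{i1}\) is the identity. Counting the triples gives \(\binom{n+k-1}{n}=\binom{n+k-1}{k-1}\) choices of \(I\), then \(n!\) choices of \(\sigma\), then \(\binom{n-2}{k-1}\) choices of \(D\), whence \(n!\binom{n-2}{k-1}\binom{n+k-1}{k-1}\).

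\emph{Expected difficulty.} There is no deep obstacle; the content is entirely in choosing the unfolding so that the ranges come out right. The delicate design decision is the asymmetry between the distinguished part and the rest: \(X_1\) is placed at the front \emph{in full}, whereas each \(P_i\) contributes only its tail \(P_i\setminus\{p_{i1}\}\). This is precisely what makes the first cut position \(\ge 2\) (so \(D\) lands in the middle list of size \(n-2\), producing the factor \(\binom{n-2}{k-1}\)) and what lets the \(k-1\) discarded elements \(p_{i1}\) be recovered as the sorted complement of \(I\). Once this choice is fixed, the rest is routine verification of size bounds and of mutual inverseness.
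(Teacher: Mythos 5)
Your proposal is correct and follows essentially the same route as the paper: both linearize the distinguished partition into a list of length \(n\) over \(I\) (recovering the first elements \(p_{i1}\) of the non-distinguished parts as the sorted complement \(J\)), with \(\sigma\) recording the order and \(D\) recording the \(k-1\) cut points in positions \(2,\dots,n-1\). The only difference is a harmless change of convention: the paper moves the first element \(x_1\) of \(X_1\) to the end of \(\sigma I\) and lets \(D\) mark where each stripped block begins, whereas you keep \(X_1\) intact at the front and let \(D\) mark where each block ends.
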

\begin{proof}
Let us   consider a triple \((I, \sigma, D)\) as in the statement of the theorem.

Then \(I= i_1, i_2, ..., i_n\) (with  \(i_1<i_2< \cdots < i_n\)) and we  denote by \(J=j_1,j_2,...,j_{k-1}\) the sublist of \(L\)  made by the numbers that do not belong to \(I\) (notice that \(j_1< j_2< \cdots < j_{k-1}\)).
Now we  use  \(\sigma \) to permute the list \(I\), obtaining a list
\[\sigma I= i_{\sigma(1)}, i_{\sigma(2)}, ..., i_{\sigma(n)}\]
To fix the notation we put    \(D= i_{\sigma(d_1)}, ..., i_{\sigma(d_{k-1})}\)  with \(d_1\geq 2, d_{k-1}\leq n-1\). 


Let us  show how to associate to  \((I, \sigma, D)\) a distinguished  internally  ordered \(k\)-partition of \([1,n+k-1]\). First  we put \(X_1\) to be equal to the set \(\{i_{\sigma(n)}, i_{\sigma(1)},..., i_{\sigma(d_1-1)}\}\) equipped with the ordering \(i_{\sigma(n)}\prec i_{\sigma(1)}\prec \cdots \prec i_{\sigma(d_1-1)}\).
Then we build  the sets \[P_2=\{j_1, i_{\sigma(d_1)}, i_{\sigma(d_1+1)},..., i_{\sigma(d_2-1)}\}\]
\[P_3=\{j_2, i_{\sigma(d_2)}, i_{\sigma(d_2+1)},..., i_{\sigma(d_3-1)}\}\]
\[...\]
\[P_k=\{j_{k-1}, i_{\sigma(d_{k-1})}, i_{\sigma(d_{k-1}+1)},..., i_{\sigma(n-1)}\}.\]
and for every \(2\leq i\leq k\) we order the elements of \(P_i\) as displayed above (for instance, in \(P_2\) we have \(j_1\prec i_{\sigma(d_1)}\prec  i_{\sigma(d_1+1)}\prec\cdots \prec  i_{\sigma(d_2-1)}\)).

We notice that the sets \(P_2,...,P_k\) form  an unordered  partition of \([1,n+k-1]-X_1\), indexed according to Remark \ref{remordering}.  
In conclusion, starting from the triple \((I, \sigma, D)\) we have produced a distinguished  internally  ordered \(k\)-partition of \([1,n+k-1]\) (see also the Example \ref{esempio} at the end of this section). It is easy to check that this  map is injective.

Let us now show that all the distinguished  internally ordered \(k\)-partitions of \([1,n+k-1]\) can be obtained starting from a triple. If \(k=1\) this is trivial. Let then \(k\geq 2\) and let   \(X_1,P_2,...,P_k\) be such a partition. 
More in detail, let \(X_1\) be the ordered set \(\{x_1,x_2,...,x_{|X_1|}\}\), and, for every \(i\) with \(2\leq i\leq k\), let \(P_i\) be the ordered set \(P_i=\{p_{i1},p_{i2},...,p_{i|P_i|}\}\).
 According to the convention described in Remark \ref{remordering},  the indices of the 
\( P_i\)'s  satisfy  \(p_{21}<\cdots < p_{k1}\). Let us denote by \(J\) the list \(p_{21},\ldots , p_{k1}\) and by \(I=i_1,...,i_n\) the sublist of \(L\) that is complementary to \(J\).
Then we choose the permutation \(\sigma \in S_n\) defined as the permutation such that  the list
\[\sigma I= i_{\sigma(1)},...,i_{\sigma(n-1)}, i_{\sigma(n)}\] coincides with the  list 
\[x_2,..,x_{|X_1|}, p_{22},p_{23},...,p_{2|P_2|},p_{32},p_{33},...,p_{3|P_3|},p_{42},...,p_{4|P_4|},...,p_{k,2},..., p_{k|P_k|},x_1\]

As a last step,  we extract from the list \(\sigma I\), after cutting out its  first element and  its  last element,  the sublist of cardinality \(k-1\) \[D=i_{\sigma(|X_1|) }, i_{\sigma(|X_1|+|P_2|-1) },  i_{\sigma(|X_1|+|P_2|+|P_3|-2) }, ..., i_{\sigma(|X_1|+|P_2|+\cdots + |P_{k-1}|-(k-2)) }\]
 (in particular, if \(k=2\) this list is \(D=i_{\sigma(|X_1|) }\)).

 We observe that, by construction, the  triple \((I,\sigma, D)\) is associated with  the distinguished  internally  ordered partition \(X_1,P_2,...,P_k\).

\end{proof}

\begin{es}
\label{esempio}
Let \(n=7\), \(k=4\), and let us consider the triple \((I,\sigma, D)\) where:
\begin{itemize}
\item \({\displaystyle I=\stackrel {i_1}1, \stackrel {i_2}2,\stackrel {i_3}3,\stackrel {i_4}4,\stackrel {i_5}6,\stackrel {i_6}9,\stackrel {i_7}{10}} \) is a sublist of \(L=1,2,3,4,5,6,7,8,9,10\).
\item \(\sigma\)  is the permutation in  \(S_7\) such that   \[\sigma I=\stackrel {i_{\sigma(1)}}2, \stackrel{i_{\sigma(2)}}6,\stackrel {i_{\sigma(3)}}1,\stackrel {i_{\sigma(4)}}{10},\stackrel {i_{\sigma(5)}}3,\stackrel {i_{\sigma(6)}}9,\stackrel {i_{\sigma(7)}}{4} \]
\item \({\displaystyle D=\stackrel {i_{\sigma(3)}}1, \stackrel{i_{\sigma(5)}}3,\stackrel {i_{\sigma(6)}}9}\)  is a sublist of the list \(6,1,10,3,9\) (i.e. the list  \(\sigma I\) without its initial and final term).

\end{itemize}
Let us associate to this triple a distinguished  internally  ordered  partition \(X_1,P_2,P_3,P_4\) of \([1,10]\) according to the bijection of Theorem \ref{teo:semiordinate}.

The set \(X_1\) is \(\{i_{\sigma(7)}, i_{\sigma(1)}, i_{\sigma(2)}\}\),  i.e. \(X_1=\{4,2,6\}\), ordered from left to right: \(4\prec 2 \prec 6\).

Now we notice that the complement of  \(I=1,2,3,4,6,9,10\) in \(L=1,2,3,4,5,6,7,8,9,10\), is the list \(J=5,7,8\).
Then \(P_2=\{5, i_{\sigma(3)}, i_{\sigma(4)}\}\), i.e. \(P_2=\{5,1,10\}\) equipped with the ordering  \(5\prec 1 \prec 10\).  In an analogous way one finds \(P_3=\{7,3\}\) and \(P_4=\{8,9\}\).

\end{es}

\section{Kirkman-Cayley dissection numbers and numbers  of dissections of a   prescribed type}
\label{sec:final}

As it is well known,  \(D_{n+1,k-1}\) coincides with  \(|\Sc_2((1,2,...,n),k)|\): an  explicit bijection between  \(\Sc_2((1,2,...,n),k)\)  and  the set of the dissections of a convex polygon with \(n+1\) labelled edges by \(k-1\) non intersecting diagonals is  illustrated by Figure \ref{dissection}.

 \begin{figure}[h]
 
 \center
\includegraphics[scale=0.4]{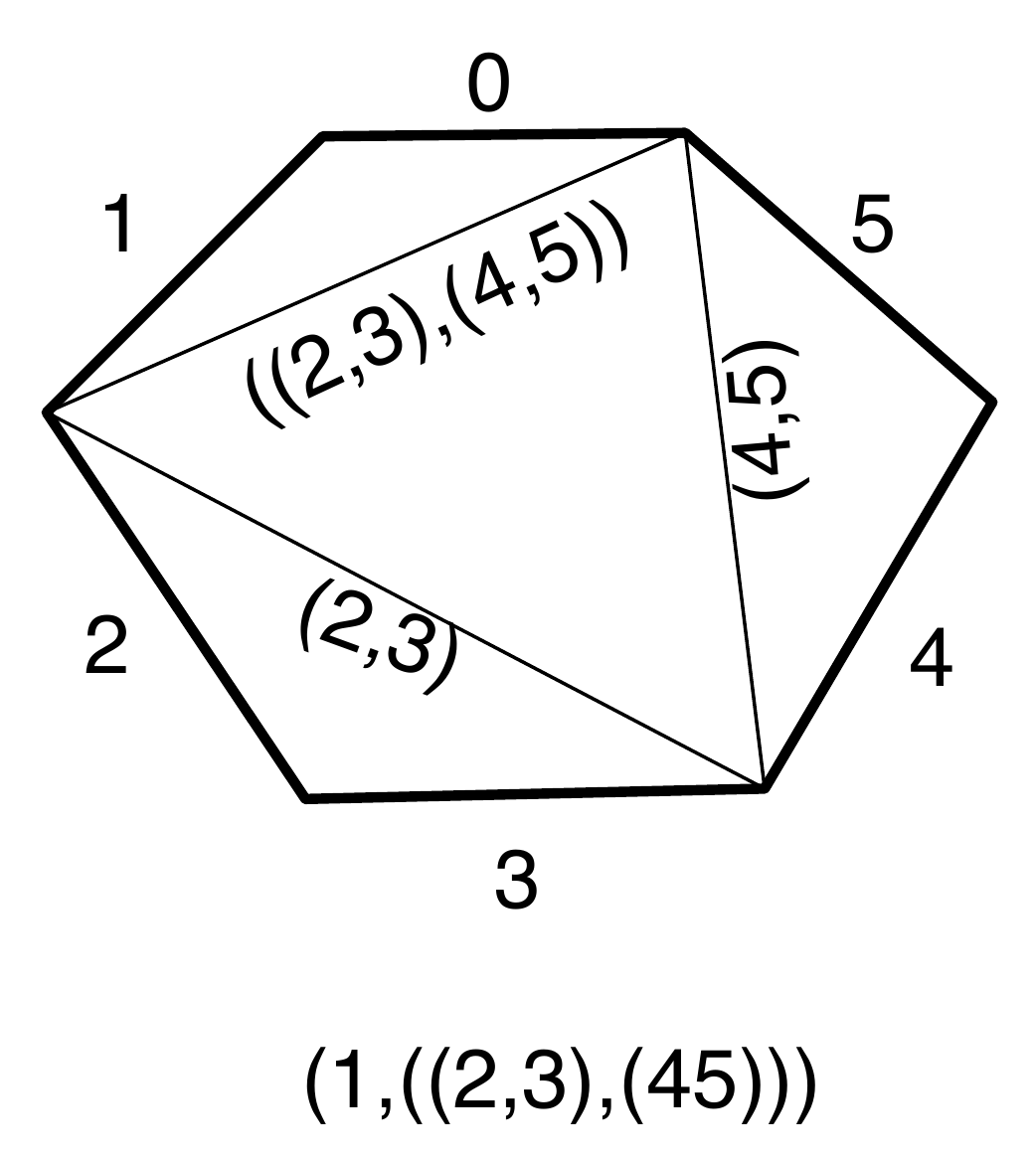}
\caption{This dissection of the hexagon with  3 diagonals produces the  parenthesized list  \((1,((2,3),(4,5)))\) that has 4  couples of parentheses.}
\label{dissection}
\end{figure}

In view of this, a   proof of Kirkman-Cayley  formula via bijections immediately follows as  a corollary of the Theorems  \ref{orderedbijectionnested} and \ref{teo:semiordinate}. 

\begin{cor}[A proof of Kirkman-Cayley formula]
\label{cor:cayleyformula}
\[D_{n+1,k-1}= \frac{1}{k}\binom{n-2}{k-1}\binom{n+k-1}{k-1}\]
\end{cor}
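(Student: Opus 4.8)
The plan is to deduce the Kirkman-Cayley formula by combining the two explicit bijections already established. First I would invoke Theorem \ref{orderedbijectionnested}, which gives a bijection between $\Oc\Sc_2((1,2,\ldots,n),k)$ and $\Ic\Tc_2(n+k-1,k)$, the set of admissible internally ordered $k$-partitions of $[1,n+k-1]$. Since $\Oc\Sc_2((1,2,\ldots,n),k)=\bigcup_{\sigma\in S_n}\Sc_2((a_{\sigma(1)},\ldots,a_{\sigma(n)}),k)$ is a disjoint union over the $n!$ permutations of $\{1,\ldots,n\}$ (the lists are distinct, so the union is genuinely disjoint), and each $\Sc_2((a_{\sigma(1)},\ldots,a_{\sigma(n)}),k)$ has the same cardinality $D_{n+1,k-1}$ by the bijection of Figure \ref{dissection}, we get $|\Ic\Tc_2(n+k-1,k)| = n!\,D_{n+1,k-1}$.

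Next I would relate $\Ic\Tc_2(n+k-1,k)$ to the distinguished internally ordered $k$-partitions. Passing from an admissible internally ordered $k$-partition to a distinguished one amounts to choosing which of the $k$ parts is the distinguished part $X_1$, so each element of $\Ic\Tc_2(n+k-1,k)$ gives rise to exactly $k$ distinguished internally ordered $k$-partitions, and this is a $k$-to-$1$ surjection. Hence the number of distinguished internally ordered $k$-partitions of $[1,n+k-1]$ equals $k\cdot|\Ic\Tc_2(n+k-1,k)| = k\cdot n!\,D_{n+1,k-1}$.

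Finally I would apply Theorem \ref{teo:semiordinate}, which counts the distinguished internally ordered $k$-partitions of $[1,n+k-1]$ as $n!\binom{n-2}{k-1}\binom{n+k-1}{k-1}$. Equating the two expressions gives
\[
k\cdot n!\,D_{n+1,k-1} = n!\binom{n-2}{k-1}\binom{n+k-1}{k-1},
\]
and dividing by $k\cdot n!$ yields the desired formula $D_{n+1,k-1}=\frac{1}{k}\binom{n-2}{k-1}\binom{n+k-1}{k-1}$.

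There is no serious obstacle here: the corollary is a bookkeeping argument chaining together Theorems \ref{orderedbijectionnested} and \ref{teo:semiordinate}. The only point requiring a small amount of care is the disjointness of the union defining $\Oc\Sc_2$ and the $k$-to-$1$ nature of the distinguishing map — both are immediate once spelled out, but they are the places where a careless count would go wrong. One should also check the edge case $k=1$ separately, where everything reduces to the trivial statement $D_{n+1,0}=1$, consistent with $\frac{1}{1}\binom{n-2}{0}\binom{n}{0}=1$.
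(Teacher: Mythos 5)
Your proof is correct and follows essentially the same route as the paper: both count $\Oc\Sc_2((1,2,\ldots,n),k)$ in two ways, identifying it with $n!\,D_{n+1,k-1}$ on one side and, via Theorem \ref{orderedbijectionnested}, with $\Ic\Tc_2(n+k-1,k)$ on the other, whose cardinality is $\frac{1}{k}$ times the count of distinguished internally ordered $k$-partitions given by Theorem \ref{teo:semiordinate}. Your explicit remarks on the disjointness of the union, the $k$-to-$1$ distinguishing map, and the $k=1$ case merely spell out details the paper leaves implicit.
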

\begin{proof}
We can count the cardinality of \(\Oc\Sc_2((1,2,...,n),k)\) in two different ways. On one hand  it is equal to 
\[n! D_{n+1,k-1}\]
since  there are \(n!\) different lists based on the set of numbers \(\{1,2,...,n\}\)  and for every list we take into account all its nested sets with \(k\) couples of parentheses.

On the other hand, by the Theorem  \ref{orderedbijectionnested}, the cardinality of \(\Oc\Sc_2((1,2,...,n),k)\)  is equal to the number of   admissible internally  ordered  \(k\)-partitions of \([1,n+k-1]\).  This can be obtained dividing by \(k\) the number of distinguished internally  ordered  \(k\)-partitions of \([1,n+k-1]\).

Therefore by Theorem \ref{teo:semiordinate} we have:
\[n! D_{n+1,k-1}= \frac{1}{k}n!\binom{n-2}{k-1}\binom{n+k-1}{k-1}\]
that, after dividing by \(n!\), gives Kirkman-Cayley formula.
\end{proof}
As another application of Theorem  \ref{orderedbijectionnested} we show a proof  of the  formula that counts the number of dissections of a prescribed type. This is a classical result (for another proof see for instance  Chapter 13 of \cite{gouldenjackson}; see also  Section 2.3. of \cite{devadossread}). 

\begin{defin}
Given  a dissection of a convex polygon with \(n+1\)  labelled edges by  \(k-1\) diagonals,  such that no two of the diagonals  intersect in their interior, we say that the dissection is of type \((i_1^{m_1},i_2^{m_2},...,i_s^{m_s})\), with \(3\leq i_1<i_2 <\cdots < i_s\leq n+1\), if    the dissection  is made by  \(m_j\) polygons with \(i_j\) edges,  for every \(j=1,2,...,s\).
\end{defin}

\begin{rem}
\label{propfacile}
As one can immediately check, the  numbers that appear in the above definition  satisfy  the relations  \({\displaystyle \sum_{j=1}^s m_j=k}\)  and \({\displaystyle \sum_{j=1}^s i_j m_j=n+2k-1}\).
\end{rem}



\begin{cor}[A proof of the formula for the dissections of a  prescribed type]
\label{prescribedpolygons}
Let   \(n,k\) be two integers such that  \(n \geq 2\) and \(n-1\geq k\geq 1\). Given  a convex polygon with \(n+1\)  labelled edges, the number of  its   dissections of type \((i_1^{m_1},i_2^{m_2},...,i_s^{m_s})\) by  \(k-1\) diagonals  is
\begin{equation}
\label{prescribedformula}  \frac{(n+k-1)!}{n! \ m_1!m_2!\cdots m_s!}  
\end{equation}
\end{cor}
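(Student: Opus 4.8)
The plan is to deduce the prescribed-type formula by a refinement of the counting argument used in Corollary \ref{cor:cayleyformula}. As there, I would compute $|\Oc\Sc_2((1,2,\dots,n),k)|$ restricted to the dissections of a fixed type $(i_1^{m_1},\dots,i_s^{m_s})$ in two ways, using the bijection $\Gamma$ of Theorem \ref{orderedbijectionnested} to pass to internally ordered $k$-partitions of $[1,n+k-1]$. On the dissection side, each ordered list $a_{\sigma(1)},\dots,a_{\sigma(n)}$ contributes the number $N$ of dissections of the $(n{+}1)$-gon of the given type, and there are $n!$ such lists, so the restricted union has cardinality $n!\,N$.

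The key step is to understand which internally ordered $k$-partitions correspond, under $\Gamma$, to a parenthesization of the prescribed type. A couple of parentheses containing $r$ numbers of the list corresponds in the tree to an internal vertex covering $r$ vertices, i.e.\ to a part $P_i$ of the partition with $|P_i|=r$; and (tracking the correspondence between internal polygons of the dissection and vertices of the tree, cf.\ Figure \ref{dissection}) such a vertex corresponds to an internal polygon with $r+1$ edges. Hence a dissection of type $(i_1^{m_1},\dots,i_s^{m_s})$ corresponds exactly to an admissible internally ordered $k$-partition of $[1,n+k-1]$ in which, for each $j$, there are precisely $m_j$ parts of cardinality $i_j-1$. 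I would then use the distinguished variant of Theorem \ref{teo:semiordinate}: the number of such partitions, divided by $k$ (for the choice of distinguished part) is the count I need, but it is cleaner to instead count the underlying \emph{set-of-blocks} data directly. The total number of internally ordered $k$-partitions of an $(n{+}k{-}1)$-element set with block-size multiset fixed to contain $m_j$ blocks of size $i_j-1$ is
\[
\frac{(n+k-1)!}{\prod_j \big((i_j-1)!\big)^{m_j}\, m_j!}\;\cdot\;\prod_j \big((i_j-1)!\big)^{m_j}
\;=\;\frac{(n+k-1)!}{m_1!\,m_2!\cdots m_s!},
\]
where the multinomial coefficient distributes the $n+k-1$ labels into the blocks (the $m_j!$ accounting for the unordered blocks of equal size), and each block of size $i_j-1$ can then be internally ordered in $(i_j-1)!$ ways, cancelling the corresponding factor. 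Using Remark \ref{propfacile} one checks the block sizes indeed sum to $\sum_j m_j(i_j-1)=n+k-1$ and that there are $\sum_j m_j=k$ blocks, so this is consistent.

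Equating the two counts gives $n!\,N=\dfrac{(n+k-1)!}{m_1!m_2!\cdots m_s!}$, whence $N=\dfrac{(n+k-1)!}{n!\,m_1!m_2!\cdots m_s!}$, which is \eqref{prescribedformula}. The main obstacle I anticipate is the bookkeeping in the middle step: one must verify carefully that $\Gamma$ sends a parenthesization with a given multiset of parenthesis-sizes to a partition with the matching multiset of block-sizes (this is essentially immediate from the construction of $\Gamma$, since a couple of parentheses enclosing $r$ entries becomes a vertex covering $r$ children, hence a block of size $r$), and that admissibility ($|P_i|\ge 2$, i.e.\ $i_j\ge 3$) is exactly the stated constraint; and one must be sure that summing over all $n!$ orderings on the left is legitimate, i.e.\ that $\Gamma$ restricted to the prescribed type is a bijection onto the set of internally ordered partitions with the prescribed block-size multiset, which follows since $\Gamma$ is a bijection and the type is an invariant of the correspondence. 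No serious computation is required beyond the elementary multinomial identity displayed above.
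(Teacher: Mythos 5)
Your proposal is correct and follows essentially the same route as the paper: both pass through Theorem \ref{orderedbijectionnested} to identify dissections of type \((i_1^{m_1},\dots,i_s^{m_s})\) with admissible internally ordered \(k\)-partitions having \(m_j\) parts of size \(i_j-1\), count these partitions as \(\frac{(n+k-1)!}{m_1!\cdots m_s!}\), and then use the \(S_n\)-symmetry (your double count over the \(n!\) orderings is just the paper's division by \(n!\) phrased differently). Your explicit multinomial computation simply fills in what the paper calls a ``quick and elementary computation.''
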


\begin{proof}
Let us consider a convex polygon with \(n+1\)   edges labelled counterclockwise from \(0\) to \(n\), as in the example of Figure \ref{dissection}. As we know,  a dissection of this polygon  by  \(k-1\) diagonals corresponds to a nested set of the list \(1,2,...,n\) and therefore,  in view of Theorem \ref{orderedbijectionnested}, to an admissible internally ordered \(k\)-partition of \([1,n+k-1]\).
By  construction of the bijection of Theorem \ref{orderedbijectionnested}, each  part of this partition describes one of the   polygons  of  the dissection, and  if it has cardinality \(a\) this  polygon  has \(a+1\) edges. 
Therefore  each dissection  of type \((i_1^{m_1},i_2^{m_2},...,i_s^{m_s})\) corresponds to an  admissible   internally ordered partition of \([1,n+k-1]\)  that has \(m_j\) parts of cardinality \(i_j-1\) for every \(j=1,...,s\). 

Let us then  denote by \(A\) the set  of  all the admissible   internally ordered partitions of \([1,n+k-1]\)  that have  \(m_j\) parts of cardinality \(i_j-1\), for every \(j=1,...,s\). 
A quick and elementary computation shows that 
\[|A|=\frac{(n+k-1)!}{m_1!m_2!\cdots m_s!}\]
Now by  Theorem \ref{orderedbijectionnested} we know that each of the partitions in \(A\)  corresponds to a nested set  of a  list that is a  permutation of  the list \(1,2,...,n\).
Then,  by  \(S_n\)-symmetry,  the partitions in \(A\) that correspond to a nested set of the list \(1,2,...,n\), i.e., to a dissection of the prescribed type,  are  exactly 
\[\frac{1}{n!}|A|=\frac{1}{n!}\frac{(n+k-1)!}{m_1!m_2!\cdots m_s!}\]

\end{proof}

Even if our proof  of Kirkman-Cayley  formula 
is    purely combinatorial, here we sketch out   a  geometric interpretation.
 Let us  consider  the real moduli space \({\overline M}_{0,n+1}\) of stable \(n+1\)-pointed  genus 0 curves.

In \cite{gaimrn0}, \cite{gaimrn} some spherical models of subspace arrangements are described, in the spirit of De Concini and Procesi construction of wonderful models in \(\cite{DCP1}\). These spherical models are manifolds with corners, and the minimal spherical model associated with   the root arrangement of type \(A_{n-1}\)
is made by  the disjoint union of \(n!\) copies of the \((n-2)\)-dimensional Stasheff's associahedron (for a concrete realization see \cite{gaiffipermutonestoedra}). Furthermore, there is a surjective map \(\Gamma\) from this  minimal spherical model to the minimal real compact De Concini-Procesi model of type \(A_{n-1}\), that is isomorphic to \({\overline M}_{0,n+1}\). 

This  map \(\Gamma\) sends the \(k-1\)-codimensional faces of the associahedra into the \(k-1\)-codimensional strata of  the boundary of \({\overline M}_{0,n+1}\). We notice that these strata    are indexed by the elements of \(\Sc_2(n,k)\) and the  resulting tessellation coincides with the one previously  described in \cite{Kapranov},  \cite{devadosstessellation} and  \cite{devadossread}.

The  picture above  leads to our computation since  one observes that the  \(k-1\)-codimensional faces in the minimal spherical model are indexed by \(\Oc\Sc_2((1,2,...,n),k)\). 
On one hand they are \(n! D_{n+1,k-1}\), given that  \(D_{n+1,k-1}\) counts the \(k-1\)-codimensional faces of a \((n-2)\)-dimensional Stasheff's associahedron.
On the other hand,   one can count them by regrouping the ones whose images via \(\Gamma\) lie in the  same isomorphism class of boundary strata of \({\overline M}_{0,n+1}\). Now,  two strata belong to the same isomorphism class if and only if their associated nested sets give rise, under the bijection of Theorem \ref{bijectionnested}, to two  partitions whose  parts  have  the same sizes. 
This remark  points out    the bijection, shown in  Theorem \ref{orderedbijectionnested}, between \(\Oc\Sc_2((1,2,...,n),k)\)  and  \(\Ic\Tc_2(n+k-1,k)\).  Then the Kirkman-Cayley formula needs only a last step,  i.e. the computation of the cardinality of \(\Ic\Tc_2(n+k-1,k)\),  that is  provided by the bijection of Theorem \ref{teo:semiordinate}. 


\addcontentsline{toc}{section}{References}
\bibliographystyle{acm}
\bibliography{Bibliogpre} 
\end{document}